      \theoremstyle{plain}
      \newtheorem{theorem}{Theorem}[section]
      \newtheorem{lemma}[theorem]{Lemma}
      \newtheorem{corollary}[theorem]{Corollary}
      \theoremstyle{definition}
      \theoremstyle{remark}
      \long\def\symbolfootnote[#1]#2{\begingroup%
      \def\thefootnote{\fnsymbol{footnote}}\footnote[#1]{#2}\endgroup} 
\numberwithin{equation}{section}
\numberwithin{conj}{section}
\numberwithin{figure}{section}
\begin{document}

\title{When are two Dedekind sums equal? }
\author{Stanislav Jabuka, Sinai Robins and Xinli Wang}

\date{ }

\maketitle
\begin{abstract}
A natural question about Dedekind sums is to find conditions on
the integers $a_1, a_2$, and $b$ such that $s(a_1,b) = s(a_2, b)$.
We prove that if the former equality holds then  $ b \ | \ (a_1a_2-1)(a_1-a_2)$.
Surprisingly, to the best of our knowledge such statements have not appeared in the literature.
A similar theorem is proved for the more general Dedekind-Rademacher sums as well, 
namely that for any fixed non-negative integer $n$, a positive integer modulus $b$, and two integers $a_1$ and $a_2$ 
that are relatively prime to $b$, the hypothesis $r_n (a_1,b)= r_n (a_2,b)$ implies that 
 $b \  |  \    (6n^2+1-a_1a_2)(a_2-a_1)$.
\end{abstract}

\symbolfootnote[0]{2010 Mathematics Subject Classification:11F20.\\
{\indent \indent \it Key words and phrases}. Dedekind sums,Dedekind-Rademacher sums, 
Dedekind reciprocity law, Reciprocity law for Dedekind-Rademacher sums.\\
\indent \indent The second author is supported in part by the SUG grant M5811053,
from the Nanyang Technological University. The first author was partially supported by the NSF grant DMS-0709625.
}

\section{Introduction}
\bigskip

\indent \indent Dedekind sums arise naturally in many fields, most prominently in combinatorial geometry \cite{Beck} and in the theory of modular forms \cite{Rademacher}.
 The classical {\bf Dedekind sum} is defined by:
\[
s(a,b) = \sum_{k=0}^{b-1}{\left(\left(\frac{ka}{b}\right)\right)\left(\left(\frac{k}{b}\right)\right)},
\]
where $a$ and $b$ are any two relatively prime integers, and where the {\bf Sawtooth function} is defined by
\begin{equation*}
((x))=
\begin{cases} \{x\} - \frac{1}{2} & \text{if $x \notin \mathbb Z$,}
\\
0 &\text{if $x \in \mathbb Z$.}
\end{cases}
\end{equation*}

The Dedekind sum enjoys two important properties.  The first of these properties is the periodicity of the Dedekind sums in the
first variable, namely $s(a + kb,b) = s(a,b) \mbox{ for all } k \in \mathbb Z$. The second, and deeper, of these properties is the famous reciprocity law for Dedekind sums:
\[
s(a,b)+s(b,a)  = -\frac{1}{4}+\frac{1}{12}\left(\frac{a}{b}+
\frac{1}{ab}+\frac{b}{a} \right),
\]
valid for any two relatively prime integers  $a$ and $b$.
It is very natural to ask under what conditions on the integers $a_1, a_2$, and $b$ is it true that
\[
s(a_1,b) = s(a_2, b)?
\]

\noindent
This question also arises in topological considerations, and involves the correction terms of the Heegaard Floer Homology \cite{Steven}. We answer this question with the following results.

\begin{theorem}\label{Dedekind}
Let $b$ be a positive integer, and $a_1$, $a_2$ any two integers that are relatively prime to $b$.   If $s(a_1,b)=s(a_2,b)$, then
\[
b|(1-a_1a_2)(a_1-a_2).
\]
\end{theorem}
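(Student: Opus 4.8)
The plan is to combine the reciprocity law with the classical integrality of Dedekind sums. Because $s(\cdot,b)$ and the residue of $(1-a_1a_2)(a_1-a_2)$ modulo $b$ both depend only on $a_1,a_2$ modulo $b$ (by periodicity), I may assume $1\le a_1,a_2\le b-1$; if $a_1=a_2$ the conclusion is trivial, so I assume $a_1\neq a_2$, and in particular $b\ge 3$.

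The first step is to write the reciprocity law for the pairs $(a_1,b)$ and $(a_2,b)$ and subtract them. On the left this produces $[s(a_1,b)-s(a_2,b)]+[s(b,a_1)-s(b,a_2)]$, and the first bracket vanishes by hypothesis. On the right, using $\tfrac1{a_1}-\tfrac1{a_2}=\tfrac{a_2-a_1}{a_1a_2}$, the three terms collapse to a single fraction, and one obtains
$$s(b,a_1)-s(b,a_2)=\frac{(a_1-a_2)\bigl(a_1a_2-1-b^2\bigr)}{12\,b\,a_1a_2}.$$
The second step is to recall the classical fact that $12a\cdot s(c,a)\in\mathbb Z$ whenever $\gcd(a,c)=1$ (in fact the denominator of $s(c,a)$ divides $2a\gcd(3,a)$; see Rademacher--Grosswald, \emph{Dedekind Sums}). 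Consequently $12a_1a_2\bigl(s(b,a_1)-s(b,a_2)\bigr)=a_2\cdot 12a_1s(b,a_1)-a_1\cdot 12a_2s(b,a_2)$ is an integer. Multiplying the displayed identity by $12a_1a_2$ then shows that $\dfrac{(a_1-a_2)(a_1a_2-1-b^2)}{b}\in\mathbb Z$, i.e.\ $b\mid(a_1-a_2)(a_1a_2-1-b^2)$. Since $b\mid b^2(a_1-a_2)$, this is equivalent to $b\mid(a_1-a_2)(a_1a_2-1)=-(1-a_1a_2)(a_1-a_2)$, which is exactly the assertion.

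The one point that needs care is the denominator bookkeeping in the second step: one must check that the factor $12$ coming from reciprocity, together with the factor $a_1a_2$ in the denominator, combines with the denominator of $s(b,a_i)$ (which is built only from $a_i$ and the primes $2$ and $3$) so that a full factor of $b$ is forced to divide the numerator — and this works precisely because $\gcd(a_1a_2,b)=1$ and no factor of $b$ appears in the denominator of $s(b,a_i)$. The genuinely clever move, by contrast, is almost free: subtracting two instances of the reciprocity law annihilates the awkward $s(\cdot,b)$ terms and leaves a transparent rational function of $a_1,a_2,b$. One can sanity-check the displayed identity on small moduli; e.g.\ for $b=5$ and $(a_1,a_2)=(2,3)$ one has $s(2,5)=s(3,5)=0$ while $s(5,2)-s(5,3)=\tfrac1{18}$, matching the right-hand side.
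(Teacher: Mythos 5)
Your proposal is correct and follows essentially the same route as the paper: subtract two instances of Dedekind reciprocity so the hypothesis $s(a_1,b)=s(a_2,b)$ cancels those terms, then invoke the integrality fact that $12a\,s(b,a)\in\mathbb Z$ to conclude $b$ divides the numerator $(a_1-a_2)(a_1a_2-1-b^2)$, hence $(1-a_1a_2)(a_1-a_2)$. The only difference is the order of operations (you subtract first and clear denominators afterward, while the paper clears denominators before subtracting), plus your slightly more careful normalization of $a_1,a_2$ to positive residues before applying reciprocity.
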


\noindent
An immediate corollary of this theorem is the following result:
\begin{corollary}\label{coro1}
Let $p$ be a prime.    Then  $s(a_1,p) = s(a_2,p)$ if and only if $a_1 \equiv a_2  \bmod p$, or $a_1 a_2 \equiv 1 \bmod p$.
\end{corollary}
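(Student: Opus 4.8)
The plan is to derive the two directions from different inputs: the ``only if'' direction is essentially just Theorem~\ref{Dedekind} specialized to a prime modulus, while the ``if'' direction rests on two elementary symmetries of the Dedekind sum, one of which is already recorded in the introduction.

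For the ``only if'' direction, assume $s(a_1,p)=s(a_2,p)$. Theorem~\ref{Dedekind} gives $p\mid(1-a_1a_2)(a_1-a_2)$, and since $p$ is prime it must divide one of the two factors. Thus either $a_1a_2\equiv 1\pmod p$ or $a_1\equiv a_2\pmod p$, which is exactly the asserted conclusion; this half requires no further work.

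For the ``if'' direction I would use (i) the periodicity $s(a+kp,p)=s(a,p)$ stated in the introduction, which immediately disposes of the case $a_1\equiv a_2\pmod p$, and (ii) the inversion symmetry $s(\bar a,b)=s(a,b)$ whenever $a\bar a\equiv 1\pmod b$. Granting (ii): if $a_1a_2\equiv 1\pmod p$ then $a_2\equiv\bar a_1\pmod p$, so by periodicity and then inversion symmetry $s(a_2,p)=s(\bar a_1,p)=s(a_1,p)$. To establish (ii) I would reindex the defining sum by $k\mapsto\bar a k$: as $k$ runs over a complete residue system modulo $b$ so does $\bar a k$, and because $((x))$ depends only on $x$ modulo $1$ and $\bar a a\equiv 1\pmod b$ one has $((\bar a k\cdot a/b))=((k/b))$; hence $s(a,b)=\sum_k((ka/b))((k/b))=\sum_k((k/b))((\bar a k/b))=s(\bar a,b)$.

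Since the forward direction is handed to us by Theorem~\ref{Dedekind}, the only point needing genuine care is the inversion symmetry (ii): it is classical but is not among the properties listed in the introduction, so I would include the short change-of-variables argument above to keep the corollary self-contained. Everything else is bookkeeping; there is no real obstacle here, the substance having been placed in Theorem~\ref{Dedekind}.
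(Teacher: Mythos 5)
Your proof is correct and follows the route the paper intends: the paper offers no explicit proof, declaring the corollary ``immediate'' from Theorem~\ref{Dedekind}, and your ``only if'' direction is exactly that specialization to a prime modulus. Your ``if'' direction correctly supplies the two symmetries the paper leaves implicit --- periodicity and the inversion identity $s(\bar a,b)=s(a,b)$, the latter via the valid reindexing $k\mapsto \bar a k$ --- which is a worthwhile addition since the equivalence genuinely needs that classical fact and the paper never states it.
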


\bigskip 

We note that the converse of Theorem \ref{Dedekind} is false in general.   Consider, for example,
$b=40$, and $a_1=37, a_2=33$.   Then $b \ | \ (1-37\cdot33)(37-33) = 20\cdot4$, and yet
$s(37,40)=-\frac{13}{16}$ and $s(33,40)=-\frac{5}{16}$, so that $s(a_1, b) \not= s(a_2, b)$ in this case.

\bigskip
\noindent
We also study the analogous question for the Dedekind-Rademacher sums, which arise in Donald Knuth's work on pseudo-random number generators.   Given any
non-negative integer $n$, and any two relatively prime integers $a$ and $b$, we define the Dedekind-Rademacher sum by:
\[
r_n(a,b) = \sum_{k=0}^{b-1}{\left(\left(\frac{ka+n}{b}\right)\right)\left(\left(\frac{k}{b}\right)\right)}.
\]

In order to state the corresponding reciprocity law for the Dedekind-Rademacher sums, we define
\begin{equation*}
\chi_a(n)=
\begin{cases} 1 & \text{if $a|n$,}
\\
0 &\text{otherwise.}
\end{cases}
\end{equation*}

\begin{lemma}[Reciprocity law for Dedekind-Rademacher sums]\label{rec12}
Let $a$ and $b$ be relatively prime positive integers. Then for $n=1,2,\cdots,a+b,$
\begin{align*}
r_n(a,b)+r_n(b,a)&= \frac{n^2}{2ab}-\frac{n}{2}\left(\frac{1}{a}+ \frac{1}{b} + \frac{1}{ab}\right)+\frac{1}{12}\left(\frac{b}{a}+ \frac{a}{b}+ \frac{1}{ab} \right)\\
&+ \frac{1}{2}\left( \left(\left(\frac{a^{-1}n}{b}\right)\right)+\left(\left(\frac{b^{-1}n}{a}\right)\right)+\left(\left(\frac{n}{a}\right)\right)+\left(\left(\frac{n}{b}\right)\right) \right)\\
&+ \frac{1}{4}\left(1+\chi_a(n) + \chi_b(n)\right),
\end{align*}
where $aa^{-1} \equiv 1 \bmod b$ and $bb^{-1} \equiv 1 \bmod a.$
\end{lemma}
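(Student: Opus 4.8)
The plan is to reduce the statement to the classical Dedekind reciprocity law quoted in the introduction, by first ``linearizing'' $r_n$. Substituting $j\equiv ka\pmod b$ (so $k\equiv a^{-1}j\pmod b$) in the definition gives $r_n(a,b)=\sum_{j=0}^{b-1}\left(\left(\tfrac{j+n}{b}\right)\right)\left(\left(\tfrac{a^{-1}j}{b}\right)\right)$, and subtracting the $n=0$ instance yields
\[
r_n(a,b)-s(a,b)=\sum_{j=0}^{b-1}\left[\left(\left(\tfrac{j+n}{b}\right)\right)-\left(\left(\tfrac{j}{b}\right)\right)\right]\left(\left(\tfrac{a^{-1}j}{b}\right)\right).
\]
For $1\le n\le b-1$ the bracketed difference equals $n/b$ when $1\le j\le b-n-1$, equals $n/b-\tfrac12$ at the single point $j=b-n$, and equals $n/b-1$ when $b-n+1\le j\le b-1$ (the $j=0$ term is irrelevant, being multiplied by $\left(\left(0\right)\right)=0$). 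Since $\sum_j\left(\left(\tfrac{a^{-1}j}{b}\right)\right)=0$ the $n/b$ part drops out, and reindexing $j\mapsto b-j$ gives
\[
r_n(a,b)=s(a,b)+\sum_{i=1}^{n-1}\left(\left(\tfrac{a^{-1}i}{b}\right)\right)+\tfrac12\left(\left(\tfrac{a^{-1}n}{b}\right)\right),
\]
which, both sides being $b$-periodic in $n$, then holds for all $n\in\mathbb Z$.

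Next I would add this identity to its twin (with $a,b$ interchanged), apply classical reciprocity to $s(a,b)+s(b,a)$, and observe that the terms $\tfrac12\left(\left(\tfrac{a^{-1}n}{b}\right)\right)$ and $\tfrac12\left(\left(\tfrac{b^{-1}n}{a}\right)\right)$ already match the target; what remains is to evaluate $\sum_{i=1}^{n-1}\bigl[\left(\left(\tfrac{a^{-1}i}{b}\right)\right)+\left(\left(\tfrac{b^{-1}i}{a}\right)\right)\bigr]$. The key pointwise identity is: for $1\le i\le a+b-1$,
\[
\left(\left(\tfrac{a^{-1}i}{b}\right)\right)+\left(\left(\tfrac{b^{-1}i}{a}\right)\right)=\tfrac{i}{ab}-\tfrac12\bigl(\chi_a(i)+\chi_b(i)\bigr).
\]
When $a\nmid i$ and $b\nmid i$, let $p$ and $q$ be the least positive residues of $a^{-1}i\bmod b$ and $b^{-1}i\bmod a$; then $ap+bq\equiv i\pmod{ab}$, while $a+b\le ap+bq\le 2ab-a-b$ together with $1\le i\le a+b-1$ force $ap+bq=i+ab$, so $\left(\left(\tfrac pb\right)\right)+\left(\left(\tfrac qa\right)\right)=\tfrac{ap+bq}{ab}-1=\tfrac{i}{ab}$. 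When $a\mid i$ (and then necessarily $b\nmid i$) one of the two terms vanishes while $\left(\left(\tfrac{a^{-1}i}{b}\right)\right)=\tfrac{i}{ab}-\tfrac12$ since $i/a<b$ in this range, and symmetrically for $b\mid i$. Summing over $i=1,\dots,n-1$,
\[
\sum_{i=1}^{n-1}\Bigl[\left(\left(\tfrac{a^{-1}i}{b}\right)\right)+\left(\left(\tfrac{b^{-1}i}{a}\right)\right)\Bigr]=\frac{(n-1)n}{2ab}-\frac12\Bigl\lfloor\frac{n-1}{a}\Bigr\rfloor-\frac12\Bigl\lfloor\frac{n-1}{b}\Bigr\rfloor,
\]
and using $\lfloor(n-1)/a\rfloor=n/a-\left(\left(\tfrac na\right)\right)-\tfrac12-\tfrac12\chi_a(n)$ (and its $b$-analogue) converts the right-hand side into exactly the non-polynomial part of the claimed formula; recombining with $s(a,b)+s(b,a)=-\tfrac14+\tfrac1{12}\bigl(\tfrac ab+\tfrac ba+\tfrac1{ab}\bigr)$ then assembles the entire right-hand side of the lemma, the stray $\tfrac14$ included.

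I expect the main obstacles to be two pieces of bookkeeping. First, the linearization must be handled carefully at the endpoints of the wrap-around range: the boundary value $n/b-\tfrac12$ at $j=b-n$ is precisely what produces the $\tfrac12\left(\left(\tfrac{a^{-1}n}{b}\right)\right)$ term, and the floor-to-sawtooth conversion must treat the cases $a\mid n$ and $b\mid n$ separately, since those generate the $\chi_a(n),\chi_b(n)$ contributions. Second, and more conceptual: $r_n(a,b)+r_n(b,a)$ is periodic in $n$ with period $ab$, while the target expression is not, so the identity can only hold on a bounded range. The constraint $1\le n\le a+b$ is exactly what the argument needs, because the clean evaluation $ap+bq=i+ab$ requires $i\le a+b-1$; for larger $n$ the sum $\sum_i\bigl[\left(\left(\tfrac{a^{-1}i}{b}\right)\right)+\left(\left(\tfrac{b^{-1}i}{a}\right)\right)\bigr]$ picks up further floor-type corrections and the closed form changes. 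I would also note, without carrying it out, an alternative route mimicking the classical contour-integral proof of Dedekind reciprocity applied to $\cot(\pi z)\cot(\pi az)\cot(\pi bz)$ with a weighting designed so that the residue at each pole $z=h/b$ carries a factor $e^{2\pi i hn/b}$; there the bound $n\le a+b$ reappears as the condition controlling the growth of the integrand along the contour, but the estimates are heavier.
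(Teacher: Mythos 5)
Your proposal is correct, but a comparison with ``the paper's proof'' is moot here: the paper does not prove Lemma \ref{rec12} at all --- it states it and defers to the literature (``The proof of Lemma \ref{rec12} can be found, for example, in \cite{Beck}''). What you supply is therefore a genuine addition: a self-contained, elementary derivation that reduces the Dedekind--Rademacher reciprocity to the classical Dedekind reciprocity, rather than redoing a cotangent/lattice-point argument from scratch as in \cite{Beck}. I checked the steps: the linearization $r_n(a,b)=s(a,b)+\sum_{i=1}^{n-1}\left(\left(\frac{a^{-1}i}{b}\right)\right)+\frac12\left(\left(\frac{a^{-1}n}{b}\right)\right)$ is right (the boundary term at $j=b-n$ and the sign flip under $j\mapsto b-j$ both work out, and the identity extends to all $n$ by $b$-periodicity after also checking $n\equiv 0$); the three-term relation $\left(\left(\frac{a^{-1}i}{b}\right)\right)+\left(\left(\frac{b^{-1}i}{a}\right)\right)=\frac{i}{ab}-\frac12\bigl(\chi_a(i)+\chi_b(i)\bigr)$ for $1\le i\le a+b-1$ is correctly justified, since $ap+bq\equiv i\pmod{ab}$ together with $ap+bq\ge a+b>i$ and $ap+bq<2ab$ forces $ap+bq=i+ab$; and the final bookkeeping (floor-to-sawtooth conversion, recombination with $s(a,b)+s(b,a)$) assembles the stated right-hand side exactly --- I verified the result numerically at $a=2$, $b=3$, $n=a+b=5$. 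Your approach has the added virtue of isolating precisely where the hypothesis $1\le n\le a+b$ is used (namely, that the three-term relation only holds for $i\le a+b-1$), which the bare citation in the paper leaves opaque. The only loose ends worth a sentence in a final write-up are the degenerate cases $a=1$ or $b=1$, where one of the two sawtooth terms vanishes identically and the claim should be confirmed directly.
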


\noindent
The proof of Lemma \ref{rec12} can be found, for example,  in \cite{Beck}.
For the Dedekind-Rademacher sums, we have the following two results.

\begin{theorem}\label{Dedekind-Rademacher}
Fix a non-negative integer $n$ and a positive integer $b$.  Let $a_1$ and $a_2$ be any two integers that are relatively prime to $b$.

If $r_n (a_1,b)= r_n (a_2,b)$, then
\[
b \  |  \    (6n^2+1-a_1a_2)(a_2-a_1).
\]
\end{theorem}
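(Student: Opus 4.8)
\emph{Proof sketch.} The plan is to reduce the statement to the single congruence
\[
12\,b\,r_n(a,b)\ \equiv\ a+(6n^2+1)\,\overline a \pmod b ,
\]
where $\overline a$ is any inverse of $a$ modulo $b$, and then to extract the theorem from it. Since $r_n(a,b)$, and $6n^2+1$ modulo $b$, and the congruence itself all depend on $n$ only through $n\bmod b$, we may assume $1\le n\le b$ (taking $n=b$ in the case $n\equiv 0$); we may also take $1\le a<b$, and then $n\le a+b$, so Lemma~\ref{rec12} is available for the pair $(a,b)$. Granting the displayed congruence, Theorem~\ref{Dedekind-Rademacher} follows at once: if $r_n(a_1,b)=r_n(a_2,b)$, then subtracting the two instances of the congruence gives $a_1+(6n^2+1)\overline{a_1}\equiv a_2+(6n^2+1)\overline{a_2}\pmod b$, and multiplying through by the unit $a_1a_2$ and collecting terms yields $(a_1a_2-6n^2-1)(a_1-a_2)\equiv 0\pmod b$, i.e.\ $b\mid (6n^2+1-a_1a_2)(a_2-a_1)$. (With $n\equiv 0$ this is exactly Theorem~\ref{Dedekind}.)

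To establish the congruence I would first record the integrality of $12\,b\,r_n(a,b)$. In $r_n(a,b)$ the $k=0$ term vanishes; for $1\le k\le b-1$ one has $((k/b))=\tfrac kb-\tfrac12$, while $((ka+n)/b)=\tfrac{\langle ka+n\rangle_b}{b}-\tfrac12$ except at the single index $k_0\equiv-\overline an\pmod b$ (present precisely when $b\nmid n$), where an extra $+\tfrac12$ is needed. Substituting this, using $\langle ka+n\rangle_b=ka+n-b\lfloor(ka+n)/b\rfloor$, and summing the resulting arithmetic-progression sums, one obtains
\[
12\,b\,r_n(a,b)=2a(b-1)(2b-1)+6n(b-1)-3b(b-1)+6\langle n\rangle_b+(6k_0-3b)(1-\chi_b(n))-12\sum_{k=1}^{b-1}k\lfloor\tfrac{ka+n}{b}\rfloor ,
\]
with $k_0=\langle-\overline an\rangle_b$. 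Every term on the right is an integer, so $12\,b\,r_n(a,b)\in\mathbb Z$; interchanging $a$ and $b$ gives $12\,a\,r_n(b,a)\in\mathbb Z$ as well.

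Now multiply the reciprocity identity of Lemma~\ref{rec12} through by $12ab$. Since each sawtooth value there has denominator dividing $2a$ or $2b$ and $12ab$ is a multiple of both, the outcome is an identity in $\mathbb Z$:
\[
a(12\,b\,r_n(a,b))+b(12\,a\,r_n(b,a))=6n^2+a^2+b^2+1-6n(a+b+1)+6ab\,\Sigma+3ab(1+\chi_a(n)+\chi_b(n)),
\]
where $\Sigma$ denotes the sum of the four sawtooth terms in Lemma~\ref{rec12}. Reduce modulo $b$. On the left the summand $b(12\,a\,r_n(b,a))$ vanishes. On the right, $b^2$ and the whole multiple $3ab(1+\chi_a(n)+\chi_b(n))$ of $b$ vanish, and of the four pieces of $6ab\,\Sigma$ the two whose sawtooth has denominator $a$ vanish, while $6ab\,((\overline an/b))\equiv 6a\langle\overline an\rangle_b\equiv 6n$ and $6ab\,((n/b))\equiv 6a\langle n\rangle_b\equiv 6an\pmod b$; finally $-6n(a+b+1)\equiv-6an-6n\pmod b$. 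Thus only the terms $6n^2$ and $a^2+1$, together with $-6an-6n$, $6n$, and $6an$, survive; the $\pm 6an$ and $\pm 6n$ cancel in pairs, so
\[
a(12\,b\,r_n(a,b))\equiv 6n^2+a^2+1\pmod b ,
\]
and multiplying by $\overline a$ gives the displayed congruence. The main obstacle is exactly this final modular reduction: one must keep careful track of which sawtooth and $\chi$ contributions survive modulo $b$ and verify that the survivors telescope down to $6n^2+a^2+1$; the integrality computation and the reductions of $n$ and of the $a_i$ into the ranges where Lemma~\ref{rec12} applies are routine.
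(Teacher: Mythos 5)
Your proposal is correct, and it rests on the same three pillars as the paper's argument: the reciprocity law of Lemma~\ref{rec12}, the integrality of $12a\,r_n(b,a)$ (the paper's Lemma~2.2), and reduction of the resulting integer identity modulo $b$. The organization, however, is genuinely different and in my view better. The paper never isolates a statement about a single modulus-$b$ residue: it writes the reciprocity identity twice, multiplies by $a_2$ and $a_1$ respectively, subtracts, invokes the hypothesis $r_n(a_1,b)=r_n(a_2,b)$ to kill the left-hand sums, and only then reduces mod $b$, so the two parameters $a_1,a_2$ are entangled throughout. You instead reduce each reciprocity identity mod $b$ separately to obtain the clean closed-form congruence $12\,b\,r_n(a,b)\equiv a+(6n^2+1)\,\overline a \pmod b$, from which the theorem (and Theorem~\ref{Dedekind} as the case $n\equiv 0$) drops out by an elementary subtraction. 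This buys you a strictly stronger intermediate result that completely determines $12\,b\,r_n(a,b)$ modulo $b$, and your verification that the surviving sawtooth contributions $6ab\bigl(\bigl(\overline a n/b\bigr)\bigr)\equiv 6n$ and $6ab\bigl(\bigl(n/b\bigr)\bigr)\equiv 6an$ cancel against $-6n(a+b+1)$ is correct (I checked the bookkeeping). You also address a point the paper passes over in silence: Lemma~\ref{rec12} is only stated for $1\le n\le a+b$ and for positive $a$, and your reduction of $n$ and $a$ into that range via the periodicity $r_{n+b}(a,b)=r_n(a,b)=r_n(a+b,b)$ is exactly the justification needed. The one place to be careful is your closed formula for $12\,b\,r_n(a,b)$ in the integrality step; it is right (I spot-checked it numerically), but for the purposes of the theorem all you need is that each term is an integer, which is the same level of detail as the paper's Lemma~2.2.
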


\bigskip
\noindent
Again, an immediate Corollary for prime moduli follows.
\begin{corollary}\label{coro 2}
Let $p$ be a prime.    If $r_n (a_1, p) = r_n (a_2, p)$, then it follows that  $a_1 \equiv a_2 \bmod p$, or
 \[
 a_1a_2 \equiv 1+6n^2 \bmod p.
 \]
\end{corollary}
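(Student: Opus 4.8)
The plan is to isolate, once and for all, the residue of $12\,ab\,r_n(a,b)$ modulo $b$. Concretely, I would first prove the \emph{key congruence}
\[
12\,ab\,r_n(a,b)\ \equiv\ a^2+6n^2+1 \pmod b,\qquad \gcd(a,b)=1,
\]
together with the statement that $12\,ab\,r_n(a,b)$ is an integer. Granting this, Theorem~\ref{Dedekind-Rademacher} follows formally: writing $R=r_n(a_1,b)=r_n(a_2,b)$ we have $12\,a_1b\,R\equiv a_1^2+6n^2+1$ and $12\,a_2b\,R\equiv a_2^2+6n^2+1\pmod b$, so multiplying the first congruence by $a_2$, the second by $a_1$, and subtracting eliminates the common integer $12\,a_1a_2b\,R$ and leaves
\[
0\ \equiv\ a_1^2a_2-a_1a_2^2+(6n^2+1)(a_2-a_1)\ =\ (a_2-a_1)\bigl(6n^2+1-a_1a_2\bigr)\pmod b .
\]
No residue is inverted modulo $b$ here, only multiplied, so this deduction is free; Corollary~\ref{coro 2} is then the special case of prime $b$.

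For the key congruence I would use the reciprocity law, Lemma~\ref{rec12}. Since both sides depend on $n$ only through $n\bmod b$---the left side because $r_n(a,b)$ has period $b$ in $n$, the right because $6(n+b)^2+1\equiv 6n^2+1\pmod b$---I may assume $1\le n\le b\le a+b$ (taking the representative $b$ when $n\equiv 0$, which anyway reproduces the classical identity $12ab\,s(a,b)\equiv a^2+1$), so that Lemma~\ref{rec12} applies directly. Multiplying the reciprocity identity through by $12ab$ clears every denominator and yields
\begin{align*}
12ab\,r_n(a,b)+12ab\,r_n(b,a)
 &= 6n^2-6n(a+b+1)+(a^2+b^2+1)\\
 &\quad{}+6ab\Bigl(\bigl(\bigl(\tfrac{a^{-1}n}{b}\bigr)\bigr)+\bigl(\bigl(\tfrac{b^{-1}n}{a}\bigr)\bigr)+\bigl(\bigl(\tfrac{n}{a}\bigr)\bigr)+\bigl(\bigl(\tfrac{n}{b}\bigr)\bigr)\Bigr)\\
 &\quad{}+3ab\bigl(1+\chi_a(n)+\chi_b(n)\bigr),
\end{align*}
whose right-hand side is manifestly an integer. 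Reducing it modulo $b$ is then bookkeeping: $b^2\equiv 0$ and $6nb\equiv 0$; the summands $6ab\bigl(\bigl(\tfrac{b^{-1}n}{a}\bigr)\bigr)$, $6ab\bigl(\bigl(\tfrac{n}{a}\bigr)\bigr)$ and $3ab\bigl(1+\chi_a(n)+\chi_b(n)\bigr)$ are integer multiples of $b$; and, using $aa^{-1}\equiv 1\pmod b$, one finds $6ab\bigl(\bigl(\tfrac{a^{-1}n}{b}\bigr)\bigr)\equiv 6n$ and $6ab\bigl(\bigl(\tfrac{n}{b}\bigr)\bigr)\equiv 6an\pmod b$ (both still valid when $b\mid n$). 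The terms linear in $n$ cancel, $-6na-6n+6n+6an=0$, leaving $12ab\,r_n(a,b)+12ab\,r_n(b,a)\equiv a^2+6n^2+1\pmod b$.

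What remains---and what I expect to be the only genuinely delicate point---is to show that the companion term is harmless, i.e.\ that $12ab\,r_n(b,a)\equiv 0\pmod b$, equivalently that $12a\,r_n(b,a)\in\mathbb Z$. This is a denominator estimate for Dedekind-Rademacher sums: the denominator of $r_m(\ell,k)$ divides $12k$. I would prove it directly from the sawtooth definition---write each $\bigl(\bigl(\tfrac{\cdot}{k}\bigr)\bigr)$ over the common denominator $2k$, separate out the unique index where $k\mid \ell j+m$, and use $\sum_{j=1}^{k-1}j=\tfrac{k(k-1)}{2}$ and $\sum_{j=1}^{k-1}j^2=\tfrac{k(k-1)(2k-1)}{6}$ to see that the $3/k$ left over is in fact an integer---paying attention to the fact that, unlike for the classical sums $s(\ell,k)$, the coarser bound with $6k$ in place of $12k$ genuinely fails (for instance $r_1(2,3)=\tfrac1{36}$). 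Once this estimate is in hand, the displayed reciprocity identity shows simultaneously that $12ab\,r_n(a,b)\in\mathbb Z$ and, upon reduction modulo $b$, that $12ab\,r_n(b,a)\equiv 0$; this completes the key congruence, and hence Theorem~\ref{Dedekind-Rademacher} and Corollary~\ref{coro 2}.
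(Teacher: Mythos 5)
Your proposal is correct and follows essentially the same route as the paper: Corollary \ref{coro 2} is deduced from Theorem \ref{Dedekind-Rademacher} by primality of $p$, and your proof of that theorem uses the same three ingredients as the paper's, namely the reciprocity law of Lemma \ref{rec12}, the integrality $12b\,r_n(a,b)\in\mathbb Z$ of Lemma \ref{int2} (applied with the roles of the arguments swapped to control $r_n(b,a_i)$), and the multiply-by-$a_2$ and $a_1$ then subtract elimination. Your repackaging of the argument into the single congruence $12ab\,r_n(a,b)\equiv a^2+6n^2+1\pmod b$, together with the explicit reduction of $n$ modulo $b$ so that Lemma \ref{rec12} applies, is a cleaner presentation (and your example $r_1(2,3)=\tfrac{1}{36}$ showing that the factor $12$ cannot be improved to $6$ is correct), but it is not a genuinely different argument.
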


Note that the converses of Theorem \ref{Dedekind-Rademacher} and Corollary \ref{coro 2} are generally false. A counter example is provided by $a_1=3$, $a_2=11$, $b=23$ and $n=6$ for which $b|(6n^2+1-a_1a_2)$ but $r_6(3,23)=-\frac{3}{92}$ while $r_6(11,23) = \frac{43}{92}$. 

%for the converse of Theorem \ref{Dedekind-Rademacher}: let $b=15,$ $n=2$, consider $a_1 =2$ and $a_2 =5.$ We know that $ 15 | (6 \cdot2^2+1-2 \cdot 5)(5-2),$ but $r_2(2,15) = 37/180,$  $ r_2(5,15) = 1/18,$ and $r_2(2,15) \neq r_2(5,15).$ Now I suspect that the converse of Corollary \ref{coro 2} is true...... 

As a direction for further research, we note that when $b$ is
composite and $c$ is rational,  the equation $s(x,b)=c$ might have
more than $2$ solutions in $x \in \mathbb Z$. In fact, Corollary
\ref{coro1} shows that if $b$ has $r$ distinct prime divisors,
then  the number of solutions to $s(x,b) = c$ is greater than or
equal to  $2^r$, by the usual elementary modular arithmetic
arguments. It would be quite interesting to study how many integer
solutions in $x \in \mathbb Z$ the equation $s(x,b) = c$ has in
general.

\section{Proofs}

\bigskip
We first  introduce some lesser-known but useful properties of Dedekind sums.   It is proved in \cite{Rademacher} that
\begin{equation}\label{identity}
6b \ s(a,b) \in \mathbb Z,
\end{equation}
for any two relatively prime integer $a$ and $b$. This property of Dedekind sums  gives us a nice upper bound on the denominators
that any Dedekind sum $s(a,b)$ may have,  and it plays an interesting role in the proof of Theorem \ref{Dedekind}.

%The equivalence
%\begin{equation}
%$s(a,b)=0$ if and only if $a^2 \equiv -1 \bmod b$
%\end{equation}
%is also interesting, and understanding its proof helps in understanding the proof of the main result here.

\bigskip

\begin{proof}[Proof of Theorem \ref{Dedekind}]

For any integers $a_1$ relatively prime to $b$, and $a_2$ relatively prime to $b$, Dedekind's Reciprocity law implies that we
have the following two identities:
\begin{align}
12a_1b \left(s(a_1,b)+s(b,a_1) \right) &= -3a_1b+a_1^2+b^2+1,\label{rec1} \\
12a_2 b \left(s(a_2,b)+s(b,a_2)  \right) &= -3a_2b+a_2^2+b^2+1. \label{rec2}
\end{align}

\noindent
Multiplying \eqref{rec1} with $a_2$, and multiplying  \eqref{rec2} with $a_1$,  we get
\begin{align}
12a_1a_2 b  (s(a_1,b)+s(b,a_1)) &= a_2\left(-3a_1b+a_1^2+b^2+1\right),\label{rec3} \\
12a_1a_2 b (s(a_2,b)+s(b,a_2)) &= a_1\left(-3a_2b+a_2^2+b^2+1\right).
\label{rec4}
\end{align}

\noindent
Subtracting  $\eqref{rec4} $ from $\eqref{rec3} $ gives us
\begin{align}
 & 12a_1a_2 b (s(a_1,b)+s(b,a_1))-12a_1a_2b (s(a_2,b)+s(b,a_2)) \\ \nonumber
&= a_2\left(-3a_1b+a_1^2+b^2+1
\right)-a_1\left(-3a_2b+a_2^2+b^2+1\right).
\end{align}

\noindent  We know, by assumption, that $s(a_1,b) = s(a_2,b)$, and therefore
\begin{equation}\label{eq}
12a_1a_2 b s(b,a_1)-12a_1a_2 b s(b,a_2)=
a_1^2a_2+b^2a_2-b^2a_1+a_2-a_2^2a_1-a_1.
\end{equation}
Using the fact (\ref{identity}) that $(6   a_1)  s(b, a_1) $ and $ (6 a_2)  s(b, a_2)$ are both integers, we may
reduce \eqref{eq}  mod $b$ to obtain the result
\begin{equation}
(a_2-a_1)(1-a_1a_2)\equiv 0 \bmod b.
\end{equation}
\end{proof}

\begin{lemma}\label{int2}
For any relatively prime integers $a$ and $b$, we have  $$12b  \ r_n(a,b) \in \mathbb Z.$$
\end{lemma}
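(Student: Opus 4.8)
The plan is to work directly from the definition of $r_n(a,b)$ and keep track of denominators, exactly in the spirit of the proof that $6b\,s(a,b)\in\mathbb Z$. First I would discard the $k=0$ term, which vanishes because $((0))=0$, so that
\[
r_n(a,b)=\sum_{k=1}^{b-1}\left(\left(\frac{ka+n}{b}\right)\right)\left(\left(\frac{k}{b}\right)\right).
\]
For $1\le k\le b-1$ we have $((k/b))=\frac{k}{b}-\frac12$, and for the first factor I would introduce $r_k:=(ka+n)\bmod b\in\{0,1,\dots,b-1\}$ and note that $\left(\left(\frac{ka+n}{b}\right)\right)$ equals $\frac{r_k}{b}-\frac12$ when $r_k\neq 0$ and equals $0$ when $r_k=0$; in either case the difference $\left(\left(\frac{ka+n}{b}\right)\right)-\bigl(\frac{r_k}{b}-\frac12\bigr)$ equals $\frac12\delta_k$ for some $\delta_k\in\{0,1\}$ (with $\delta_k=1$ precisely when $b\mid ka+n$).

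Then I would multiply a single summand by $12b$ and expand, obtaining
\[
12b\left(\left(\frac{ka+n}{b}\right)\right)\left(\left(\frac{k}{b}\right)\right)=\left(\frac{12\,r_k k}{b}-6r_k-6k+3b\right)+\delta_k(6k-3b).
\]
Every term on the right-hand side is an integer except possibly $\frac{12\,r_k k}{b}$, so after summing over $k$ it suffices to prove that $b\mid 12\sum_{k=1}^{b-1}r_k k$. Since $r_k\equiv ka+n\pmod b$, we have $12\sum_{k=1}^{b-1}r_k k\equiv 12a\sum_{k=1}^{b-1}k^2+12n\sum_{k=1}^{b-1}k\pmod b$, and substituting $\sum_{k=1}^{b-1}k^2=\frac{(b-1)b(2b-1)}{6}$ and $\sum_{k=1}^{b-1}k=\frac{(b-1)b}{2}$ turns this into $2a(b-1)b(2b-1)+6n(b-1)b$, which is plainly divisible by $b$. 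This finishes the proof. It is worth remarking that the $6$ in Faulhaber's formula for $\sum k^2$ is exactly why the constant is $12$ rather than $6$; the example $r_1(1,3)=-\tfrac{1}{36}$ shows that $6b\,r_n(a,b)$ need not be an integer, so the bound is sharp.

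I do not expect a serious obstacle here: the only point requiring care is the bookkeeping around the sawtooth function at integer arguments, namely isolating the correction $\tfrac12\delta_k$ and verifying that after multiplication by $12b\,((k/b))=12k-6b$ it contributes only integers. (One could alternatively try to deduce the lemma from the reciprocity law of Lemma \ref{rec12} by multiplying through by $12ab$, but that only yields integrality of $12ab\bigl(r_n(a,b)+r_n(b,a)\bigr)$ and does not isolate $r_n(a,b)$ itself, so the direct computation is cleaner.) Note also that the hypothesis $\gcd(a,b)=1$ is not actually used in this argument.
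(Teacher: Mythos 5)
Your proof is correct and takes essentially the same approach as the paper: expand both sawtooth factors, multiply by $12b$, and use the formulas for $\sum k$ and $\sum k^2$ to see that the only potentially non-integral term is divisible by $b$. The only difference is organizational --- you absorb the single term with $b \mid ka+n$ into a uniform correction $\tfrac12\delta_k$, whereas the paper excludes that index from the sum and splits into cases, so your write-up is somewhat cleaner (and, as you note, does not actually use $\gcd(a,b)=1$).
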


\begin{proof}
Note that for relatively prime numbers $a$ and $b$,there's always a solution to the equation $ka+n \equiv 0 \pmod b$, while $k \in \{0,1,\cdots,b-1\}.$
We consider two different situations.
\begin{itemize}
\item When $k=0,$ or equivalently if $n \equiv 0 \pmod b$, then $r_n(a,b) = r_0(a,b) = s(a,b)$ and it was pointed out in \cite{Rademacher} that $6b \, s(a,b) \in \mathbb Z$. 
%%%
%%%
\item When $k=k_0a+n \equiv 0 \pmod b$ where $k_0 \in \{1,\cdots,b-1\}.$
\begin{align*}
12b r_n({a,b}) &= 12b \sum_{k=0}^{b-1}{\left(\left(\frac{ka+n}{b}\right)\right)\left(\left(\frac{k}{b}\right)\right)},\\
&= 12b \sum_{k=1,k \neq k_0}^{b-1}{\left( \frac{ka+n}{b}  - \left[\frac{ka+n}{b}\right]-\frac{1}{2} \right)\left(  \frac{k}{b} - \frac{1}{2}\right)},\\
&= 12b \sum_{k=1, k \neq k_0}^{b-1}\left(\frac{k(ka+n)}{b^2}-\frac{A}{2b}+\frac{1}{4}\right),\\
&= 12b \left( \frac{a(b-1)(2b-1)}{6b}+\frac{n(b-1)}{2b}-\frac{A(b-2)}{2b} + \frac{b-2}{4} - \frac{Ck_0}{b} \right),\\
&= 2a(b-1)(2b-1) + 6n(b-1)- 6A(b-2) + 3b(b-2) - 12Ck_0.
\end{align*}
where $A,C \in \mathbb Z,$ and immediately we have $12b r_n(a,b) \in \mathbb Z.$
\end{itemize}

\end{proof}

\bigskip

\begin{proof}[Proof of Theorem \ref{Dedekind-Rademacher}]

From the  reciprocity law for the Dedekind-Rademacher sums, we know that when $a_1,a_2$
 are relatively prime to $b$, we have:
\begin{align}
12a_1b \left(r_n(a_1,b)+r_n(b,a_1)\right) \nonumber &=
6n^2+a_1^2+b^2+1-9a_1b\\ \nonumber
&-6a_1b\left(\left[\frac{a_1^{-1}n}{b}\right]
+\left[\frac{b^{-1}n}{a_1}\right]
+\left[\frac{n}{b}\right]+\left[\frac{n}{a_1}\right] \right) \\
\label{eq3} &+ 3a_1b(\chi_{a_1}(n)+\chi_{b}(n)),\\ \nonumber
\\\
12a_2b\left(r_n(a_2,b)+r_n(b,a_2)\right) \nonumber &=
6n^2+a_2^2+b^2+1-9a_2b\\ \nonumber
&-6a_2b\left(\left[\frac{a_2^{-1}n}{b}\right]
+\left[\frac{b^{-1}n}{a_2}\right]
+\left[\frac{n}{b}\right]+\left[\frac{n}{a_2}\right] \right) \\
\label{eq4} &+ 3a_2b(\chi_{a_2}(n)+\chi_{b}(n)).\\ \nonumber
\end{align}

To simplify the ensuing algebra, we let
\[
S_{a_1} = \left[\frac{a_1^{-1}n}{b}\right]
+\left[\frac{b^{-1}n}{a_1}\right]
+\left[\frac{n}{b}\right]+\left[\frac{n}{a_1}\right] \in \mathbb Z,
\]
\[
S_{a_2} = \left[\frac{a_2^{-1}n}{b}\right]
+\left[\frac{b^{-1}n}{a_2}\right]
+\left[\frac{n}{b}\right]+\left[\frac{n}{a_2}\right] \in \mathbb Z,
\]
\[
T_{a_1} = \chi_{a_1}(n)+\chi_{b}(n) \in \mathbb Z,
\]
\[
T_{a_2} = \chi_{a_2}(n)+\chi_{b}(n) \in \mathbb Z,
\]
we can rewrite \eqref{eq3} and \eqref{eq4} as follows:
\begin{align}
 12a_1b\left(r_n(a_1,b)+r_n(b,a_1)\right) \label{eq5}
  &= 6n^2+a_1^2+b^2+1-9a_1b-6a_1bS_{a_1}+3a_1bT_{a_1},\\ \nonumber
  \\\
 12a_2b\left(r_n(a_2,b)+r_n(b,a_2)\right)
  &= 6n^2+a_2^2+b^2+1-9a_2b-6a_2 bS_{a_2}+3a_2 bT_{a_2}. \label{eq6}
 \end{align}

\noindent Multiplying \eqref{eq5} with $a_2$ gives us
\begin{align}
& \label{eq7} 12a_1a_2b \left(r_n(a_1,b)+r_n(b,a_1)\right) \\
\nonumber
  &= a_2\left(6n^2+a_1^2+b^2+1-9a_1b-6a_1b S_{a_1}+3a_1bT_{a_1}\right).
\end{align}

\noindent Multiplying \eqref{eq6} with $a_1$ gives us
\begin{align}
& \label{eq8} 12a_1a_2b \left(r_n(a_2,b)+r_n(b,a_2)\right)\\
\nonumber
  &= a_1\left(6n^2+a_2^2+b^2+1-9a_2b-6a_2bS_{a_2}+3a_2bT_{a_2}\right).
\end{align}

\noindent Since $r_n(a_1,b) = r_n(a_2,b)$ ,
subtracting $\eqref{eq8}$ from $\eqref{eq7}$ we get
\begin{align}
& \label{eq9}12a_1a_2 b \left(r_n(b,a_1) - r_n(b,a_2)\right) \\
\nonumber &= (a_2- a_1)(6n^2+1-a_1a_2)+b^2(a_2-a_1)\\ \nonumber
&-6a_1a_2b (S_{a_1}+S_{a_2})+3a_1a_2b (T_{a_1}+T_{a_2}). \nonumber
\end{align}

We notice that, by Lemma  \eqref{identity}, we have  $12a_1r_n(b,a_1)\in \mathbb Z$ and $12a_2r_n(b,a_2) \in \mathbb Z$.
We may therefore reduce both sides of   \eqref{eq9} modulo $b$ to obtain the result:
\begin{equation*}
0 \equiv (6n^2+1-a_1a_2)(a_2-a_1) \bmod b.
\end{equation*}
\end{proof}

\bigskip

\bigskip

\bigskip

\bigskip

\noindent
Department of Mathematics and Statistics,\\
University of Nevada, Reno.\\
  \texttt{jabuka@unr.edu}

 \medskip \noindent
Division of Mathematical Sciences, \\
School of Physical and Mathematical Sciences,\\
Nanyang Technological University, Singapore, 637371 \\
  \texttt{rsinai@ntu.edu.sg}

\medskip \noindent
Division of Mathematical Sciences, \\
School of Physical and Mathematical Sciences,\\
Nanyang Technological University, Singapore, 637371 \\
 \texttt{wang0480@e.ntu.edu.sg}

\end{document}